\newtheorem{theorem}{Theorem}[section]
\newtheorem{proposition}[theorem]{Proposition}
\newtheorem{lemma}[theorem]{Lemma}
\newtheorem{corollary}[theorem]{Corollary}
\newtheorem*{question*}{Question}
\theoremstyle{definition}
\theoremstyle{remark}
\numberwithin{equation}{section}
\newcommand{\mmod}[1]{\,\,(\text{\rm mod}\,\,#1)}
\def\bfk{{\mathbf k}}
\def\calP{{\mathcal P}}
\def\mtil{{\widetilde m}}
\def\dbN{{\mathbb N}}  
\def\dbZ{{\mathbb Z}}\def\dbQ{{\mathbb Q}}
\def\grp{{\mathfrak p}}
\def\alp{{\alpha}} 
\def\bet{{\beta}}  
\def\gam{{\gamma}} \def\Gam{{\Gamma}}
\def\zet{{\zeta}}  
\def\tet{{\theta}}  
\def\kap{{\kappa}}
\def\lam{{\lambda}} \def\lamtil{{\widetilde \lam}}
\def\Lam{{\Lambda}} 
\def\mutil{{\widetilde \mu}}
\def\Mu{{\mathrm M}}
 \def\Ome{{\Omega}}
\def\eps{\varepsilon}
\def\le{\leqslant} \def\ge{\geqslant}
\begin{document}
\title[Smooth values of polynomials]{Smooth values of polynomials}
\author[J. W. Bober, D. Fretwell]{J. W. Bober, D. Fretwell}
\address{JWB and DF: School of Mathematics, University of Bristol, University Walk, Clifton, 
Bristol BS8 1TW, United Kingdom, and the Heilbronn Institute for Mathematical Research, 
Bristol, United Kingdom}
\email{j.bober@bristol.ac.uk, daniel.fretwell@bristol.ac.uk}
\author[G. Martin]{G. Martin}
\address{GM: Department of Mathematics, University of British Columbia, Room 121, 1984 
Mathematics Road, Vancouver, BC, Canada V6T 1Z2}
\email{gerg@math.ubc.ca}
\author[T. D. Wooley]{T. D. Wooley}
\address{TDW: School of Mathematics, University of Bristol, University Walk, Clifton, Bristol 
BS8 1TW, United Kingdom}
\email{matdw@bristol.ac.uk}
\subjclass[2010]{11N32, 11N25, 12E05}
\keywords{Smooth numbers, polynomials, small degree irreducible factors}
\date{}
\begin{abstract} Given $f\in \dbZ[t]$ of positive degree, we investigate the existence of 
auxiliary polynomials $g\in \dbZ[t]$ for which $f(g(t))$ factors as a product of polynomials 
of small relative degree. One consequence of this work shows that for any quadratic 
polynomial $f\in \dbZ[t]$ and any $\eps>0$, there are infinitely many $n\in \dbN$ for which 
the largest prime factor of $f(n)$ is no larger than $n^\eps$.\end{abstract}
\maketitle

\section{Introduction} In this paper we study the smoothness of polynomials. Recall that an 
integer is called $y$-{\it smooth} (or $y$-{\it friable}) when each of its prime divisors is less 
than or equal to $y$. Given a polynomial $f\in \dbZ[t]$ of positive degree, and a 
non-negative number $\tet$, we say that $f$ {\it admits smoothness} $\tet$ when there are 
infinitely many integers $n$ for which the polynomial value $N=|f(n)|$ is 
$N^\tet${\hskip-.7mm}-smooth. Similarly, we say that $f$ {\it admits polysmoothness} 
$\tet$ when there exists a non-constant polynomial $g\in \dbZ[t]$ having the property that 
each irreducible factor of $f(g(t))$ has degree at most $\tet (\deg f)(\deg g)$. In the latter 
circumstances, by inspecting the values $f(g(m))$ for large integers $m$, it is apparent that 
when $f$ admits polysmoothness $\tet$, then it admits smoothness $\eta$ for any 
$\eta>\tet$. Motivated by the widely held conjecture that for each $\eps>0$, every 
$f\in \dbZ[t]$ of positive degree should admit smoothness $\eps$, the latter  considerations 
prompt the following question.

\begin{question*} Given $f\in \dbZ[t]$ of positive degree $d$, is it the case that $f$ admits 
polysmoothness $\eps$ for every $\eps>0$? In other words, for each $\eps>0$, does there 
exist $g\in \dbZ[t]$ of some degree $k=k(\eps)\ge 1$ having the property that each 
irreducible factor of $f(g(t))$ has degree at most $\eps kd$?
\end{question*}

If the answer to this question is in the affirmative, then the aforementioned smoothness 
conjecture on polynomial values would follow at once. Regrettably, with the exception of 
polynomials of special shape, an affirmative answer has been available only in the case 
$d=1$. Our primary goal in this paper is to answer this question in the affirmative in the 
case $d=2$.

\begin{theorem}\label{theorem1.1} Let $f\in \dbZ[t]$ be quadratic. Then for some $c>0$ 
there are polynomials $g\in \dbZ[t]$ of arbitrarily large odd degree $k$ for which $f(g(t))$ 
factors as a product of polynomials of degree at most $ck/\sqrt{\log\log k}$. Thus $f$ 
admits polysmoothness $\eps$ for any $\eps>0$.
\end{theorem}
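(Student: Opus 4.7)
The plan is to reduce the problem to a factorization question over the splitting field $K$ of $f$, and then to construct $g$ via cyclotomic techniques exploiting the embedding of every quadratic extension of $\dbQ$ into a cyclotomic field.

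First, by replacing $f(t)$ with $cf(at+b)$ for suitable integers $a,b,c$, I may assume $f(t)=t^2-D$ for some nonzero integer $D$. Let $K=\dbQ(\sqrt D)$ with nontrivial $\dbQ$-automorphism $\sigma$. Over $K$ we have $f(g(t))=(g(t)-\sqrt D)(g(t)+\sqrt D)$, and any common divisor of these two factors in $\dbZ[t]$ must divide the constant $2\sqrt D$ and hence be a constant itself. Consequently each irreducible factor of $f(g(t))$ in $\dbZ[t]$ has the form $q(t)\sigma(q(t))$ for an irreducible factor $q$ of $g(t)-\sqrt D$ in $K[t]$, and has degree exactly $2\deg q$. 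It thus suffices to exhibit $g\in\dbZ[t]$ of odd degree $k$ such that every irreducible $K$-factor of $g(t)-\sqrt D$ has degree at most $ck/(2\sqrt{\log\log k})$.

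The prototype $f(t)=t^2+1$ with $g(t)=t^k$ succeeds because $i$ is a $k$-th power of the root of unity $\zeta_{4k}$, so that $t^k-i$ decomposes over $\dbQ(i)$ into cyclotomic-style factors of degree at most $\phi(k)$. For general $D$, Kronecker--Weber gives an embedding $K\subseteq\dbQ(\zeta_N)$ for some $N$ depending only on $D$, and a standard Gauss-sum construction produces a fixed polynomial $G\in\dbZ[t]$ with $G(\zeta_N)=\sqrt D$ (of odd degree after possibly adjusting by a multiple of $\Phi_N$). I would then take
\[
g(t)\;=\;G(t^k)
\]
for a large odd $k$ coprime to $N$. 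For any $b$ coprime to $Nk$ one computes $g(\zeta_{Nk}^b)=G(\zeta_N^b)=\chi(b)\sqrt D$, where $\chi$ is the quadratic character cutting out $K$, so every primitive $Nk$-th root of unity $\zeta_{Nk}^b$ with $\chi(b)=1$ is a root of $g(t)-\sqrt D$. All such roots lie in $\dbQ(\zeta_{Nk})$, whose $K$-Galois group has order $\phi(Nk)/2$, and hence the associated $K$-Galois orbits have size at most $\phi(Nk)/2\le(\phi(N)/2)\phi(k)$. Taking $k$ to be a primorial of small odd primes coprime to $N$ and applying Mertens' estimate $\phi(k)/k\sim 2e^{-\gamma}/\log\log k$ yields a bound of order $k/\log\log k$ on the cyclotomic $K$-factor degrees.

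The main obstacle I anticipate lies in controlling the roots of $g(t)-\sqrt D$ that are \emph{not} roots of unity: for $D\ne-1$ one cannot in general take $\deg G\le\phi(N)/2$, so the polynomial $G(t)-\sqrt D$ carries additional roots outside $\dbQ(\zeta_N)$, whose $k$-th roots supply elements of $g^{-1}(\sqrt D)$ in potentially much larger extensions of $K$ with correspondingly larger Galois orbits. I would overcome this either by selecting $G$ so as to minimize these extraneous roots, by modifying $g$ to cancel the spurious factors, or by a separate Galois-theoretic estimate for the non-cyclotomic orbits. This technical step is likely where the weakening from $1/\log\log k$ (which pure cyclotomic considerations would suggest) to the stated $1/\sqrt{\log\log k}$ enters. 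Once these estimates are in place, the polysmoothness conclusion for every $\eps>0$ follows by letting $k$ grow along odd primorials.
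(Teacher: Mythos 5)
There are two genuine gaps here, and together they account for essentially all of the content of the paper's proof of the irreducible case. First, the opening reduction to $f(t)=t^2-D$ does not work over $\dbZ$. Writing $f(t)=At^2+Bt+C$, one has $cf(at+b)=cAa^2t^2+\cdots$, so normalising the leading coefficient to $1$ and killing the linear term forces $b=-B/(2A)$, which is not an integer in general; for $f(t)=4t^2+4t+9$ --- precisely the example the introduction singles out as the outstanding hard case --- no integral substitution of the form you describe produces $t^2-D$. If instead you complete the square as $4Af(t)=(2At+B)^2-(B^2-4AC)$ and find $G$ with $G(x)^2-D$ well-factorable, you then need the final substitution $g(x)=(G(x)-B)/(2A)$ to have \emph{integer} coefficients, a congruence condition on $G$ that must be engineered. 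This integrality issue is one of the two central technical points of the actual proof (handled there by choosing the exponent $k$ coprime to $2a\phi(a)$ and showing that $B$ is a $k$-th power modulo $A$, using that the norm of $A\alpha+B$ is a perfect $k$-th power).

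Second, and more fundamentally, the construction $g(t)=G(t^k)$ with $G$ a Gauss-sum polynomial leaves most of the roots of $g(t)-\sqrt{D}$ uncontrolled, as you yourself observe. Generically $\deg G$ is of size comparable to $\phi(N)$, so $g(t)-\sqrt{D}$ has about $k\deg G$ roots, of which only about $\phi(N)\phi(k)/2$ are roots of unity; the complementary factor has degree a positive proportion of $\deg g$ and there is no reason for it to split further. None of your three proposed remedies is carried out, and the first two cannot succeed in general, since expressing $\sqrt{D}$ in $\dbZ[\zeta_N]$ genuinely requires a polynomial of large degree. The paper's Lemma 4.1 is exactly the device that removes this obstruction: using a prime $p$ that splits in $K=\dbQ(\alpha)$ and the class number of $K$, one produces fixed integers $m,n$ such that $(ma\alpha+n)^k=A\alpha+B$ is \emph{linear} in $\alpha$ for every $k$; then with $G(t)=(t^k-B)/A$ \emph{all} $2k$ roots of $f(G(t))$ have the form $(ma\alpha^{(i)}+n)\zeta$ with $\zeta^k=1$, and they assemble into rational factors $h_d$ of degree $2\phi(d)\le 2\phi(k)$, with no extraneous roots at all. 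Finally, your diagnosis of the exponent $\tfrac12$ is off: the irreducible case yields the full saving $k/\log\log k$, and the $\sqrt{\log\log k}$ in the statement comes from the reducible case (a product of two linear factors), which requires the Balog--Wooley partition of the primes into two classes and which your proposal does not address.
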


\begin{corollary}\label{corollary1.2} When $\eps>0$ and $f\in \dbZ[t]$ is quadratic, there 
are infinitely many $n\in \dbN$ for which $f(n)$ is $n^\eps$-smooth. Thus $f$ admits 
smoothness $\eps$.
\end{corollary}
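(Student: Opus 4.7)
The plan is to derive Corollary \ref{corollary1.2} directly from Theorem \ref{theorem1.1} by evaluating the factorization $f(g(t))=h_1(t)\cdots h_r(t)$ at large integers. Given $\eps>0$, I would first invoke Theorem \ref{theorem1.1} with an odd degree $k$ chosen large enough that $c/\sqrt{\log\log k}<\eps/2$; this is possible because the theorem supplies such a $g\in \dbZ[t]$ of arbitrarily large odd degree $k$. The resulting factorization then satisfies $\deg h_i\le ck/\sqrt{\log\log k}<\eps k/2$ for each $i$.

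Next, I would specialize to a large positive integer $m$ and set $n=g(m)$. Since $f(g(t))\in \dbZ[t]$ is a nonzero polynomial, $f(n)\ne 0$ for all but finitely many $m$, so $|f(n)|$ is a positive integer. Every prime $p$ dividing $|f(n)|=|h_1(m)\cdots h_r(m)|$ must divide some $h_i(m)$, and hence satisfies
\[
p\le|h_i(m)|\le A\,m^{\deg h_i}\le A\,m^{\eps k/2},
\]
where $A$ is a constant depending only on the (now fixed) coefficients of the $h_i$, and where $m$ is taken sufficiently large. On the other hand, since $g$ has degree $k$, there exists $B>0$ with $n=g(m)\ge B\,m^k$ for all large $m$, whence $n^\eps\ge B^\eps m^{\eps k}$. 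The ratio $n^\eps/p\ge (B^\eps/A)\,m^{\eps k/2}$ tends to infinity with $m$, so every prime divisor of $|f(n)|$ satisfies $p\le n^\eps$ once $m$ is large.

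Finally, it remains to note that the integers $n=g(m)$ are distinct for distinct large $m$, since $g$ is non-constant, producing infinitely many values of $n$ for which $|f(n)|$ is $n^\eps$-smooth. I do not anticipate any genuine obstacle in this deduction: once Theorem \ref{theorem1.1} is granted, the passage to Corollary \ref{corollary1.2} is merely a matter of comparing the degrees of the factors $h_i$ to the degree of $g$, and the only subtlety concerns the multiplicative constants $A$ and $B$, which are rendered harmless by the extra $\eps/2$ slack built in above.
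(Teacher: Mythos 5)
Your deduction is correct and is exactly the route the paper takes: the paper leaves this step implicit, remarking in the introduction only that polysmoothness $\tet$ yields smoothness $\eta$ for every $\eta>\tet$ ``by inspecting the values $f(g(m))$ for large integers $m$'', and your write-up simply makes that comparison of degrees explicit. The only points worth flagging are that to guarantee $n=g(m)\in\dbN$ you should send $m\to+\infty$ or $m\to-\infty$ according to the sign of the leading coefficient of $g$ (possible since $k$ is odd), and that the factors $h_i$ should be taken in $\dbZ[t]$ via Gauss's Lemma, as the paper's proof of Theorem \ref{theorem1.1} arranges.
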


The sharpest conclusion available for quadratic polynomials hitherto is due to Schinzel 
\cite[Theorem 15]{Sch1967}. This work, half a century old, shows that when $f\in \dbZ[t]$ 
is quadratic, then it admits smoothness $\tet$, where
$$\tet=\frac{1}{2}\left( 1-\frac{1}{3}\right)\left(1-\frac{1}{7}\right) \left(1-\frac{1}{47}
\right) \left( 1-\frac{1}{2207}\right) \cdots =0.27950849\ldots.$$
For certain classes of quadratic polynomials one can do better. For example, Schinzel 
\cite[Theorem 14]{Sch1967} shows that if $f(t)=a(rt+s)^2\pm b$, with $a,r,s\in \dbZ$, 
$ar\ne 0$ and $b\in\{1,2,4\}$, then $f$ admits smoothness $\eps$ for any $\eps>0$. 
However, as is implict in the concluding remarks of Schinzel \cite{Sch1967}, polynomials such 
as $4t^2+4t+9=(2t+1)^2+8$ remain inaccessible to these methods. It is for awkward 
polynomials of this type that Corollary \ref{corollary1.2} for the first time confirms the 
longstanding smoothness conjecture.\par

The state of knowledge for polynomials of degree exceeding $2$ is in general far less 
satisfactory. Discussion here requires that we return to the topic of well-factorable polynomial 
compositions. Consider a polynomial $f\in \dbZ[t]$ of degree $d\ge 2$. Then the simplest 
approach generally applicable stems from the trivial identity $f(t)\equiv 0\mmod{f(t)}$, which 
yields the only slightly less trivial relation $f(t+f(t))\equiv 0\mmod{f(t)}$. The latter is of 
course merely another means of expressing the factorisation $f(t+f(t))=f(t)h(t)$, where 
$h\in \dbZ[t]$ is some polynomial of degree $d^2-d$. This instantly shows that $f$ admits 
polysmoothness $1-1/d$, but more can be extracted by iterating this construction. Thus, in 
the next step, one substitutes $t=x+h(x)$ into the polynomial $f(t+f(t))$, and so on. In this 
way one sees that a polynomial $g\in \dbZ[t]$ may be found having the property that 
$f(g(t))$ has as many irreducible factors as desired, and moreover that $f$ admits 
polysmoothness $\tet(d)$, with
$$\tet(d)=1-\frac{1}{d-1}+O\left( \frac{1}{d^3}\right).$$
\par Schinzel \cite[Lemma 10]{Sch1967} offers a more elaborate construction, which we will 
revisit in \S3, showing that for a degree $d$ polynomial $f\in \dbZ[t]$, there exists a degree 
$d-1$ polynomial $g\in \dbZ[t]$ having the property that $f(g(t))$ has a degree $d$ factor. 
This construction may also be iterated. In order to describe the limit of Schinzel's circle of 
ideas, we introduce some notation. When $d\ge 2$, we define the sequence 
$(u_i)_{i=1}^\infty$ by putting $u_1=d-1$, and then setting $u_{i+1}=u_i^2-2$ $(i\ge 1)$. 
We may now define the exponent $\tet(d)$ by taking
$$\tet(d)=\begin{cases} \tfrac{1}{2}P(2d),&\text{when $d=2,3$,}\\
P(d),&\text{when $d>3$,}\end{cases}$$
where
\begin{equation}\label{1.1}
P(d)=\prod_{i=1}^\infty (1-1/u_i).
\end{equation}
Then Schinzel \cite[Theorem 15]{Sch1967} shows that every polynomial $f\in \dbZ[t]$ of 
degree $d$ admits polysmoothness $\tet(d)$. A modest computation reveals that
$$\tet(2)=0.27950849\ldots,\quad \tet(3)=0.38188130\ldots ,\quad \tet(4)=0.55901699\ldots 
,$$
and that for large $d$ one has
$$\tet(d)=1-\frac{1}{d-2}+O\left( \frac{1}{d^3}\right).$$
Although the conclusion of Theorem \ref{theorem1.1} supercedes this result in the case 
$d=2$, further progress for larger degrees remains elusive. This absence of progress for 
larger degrees motivates the exploration of families of polynomials $f$ admitting sharper 
smoothness than attained via Schinzel's construction. In \S3 we reinterpret Schinzel's 
method in terms of field structures associated with the splitting field for $f$ over 
$\dbQ$. Thereby, we obtain some enhancements applicable for special families of 
polynomials summarised in the following result.

\begin{theorem}\label{theorem1.3} Let $f\in \dbZ[t]$ be irreducible, and let $\alp$ be a root 
of $f$ lying in its splitting field. Suppose that for some $\gam\in \dbQ(\alp)$ and 
$g\in \dbZ[t]$ of degree $k\ge 2$, one has $\alp=g(\gam)$. Then $f(g(t))$ is divisible by 
the minimal polynomial of $\gam$ over $\dbQ$, and hence $f$ admits polysmoothness 
$1-1/k$. 
\end{theorem}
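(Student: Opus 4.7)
The plan is a short algebraic verification. First I would observe that the hypothesis $\alpha = g(\gamma)$ immediately yields $f(g(\gamma)) = f(\alpha) = 0$, so $\gamma$ is a root of the polynomial $F(t) := f(g(t)) \in \dbZ[t]$. Writing $m_\gamma \in \dbQ[t]$ for the minimal polynomial of $\gamma$ over $\dbQ$, it then follows by the defining property of minimal polynomials that $m_\gamma \mid F$ in $\dbQ[t]$.

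Next I would compute the degree of $m_\gamma$. The hypothesis $\gamma \in \dbQ(\alp)$ gives $\dbQ(\gamma) \subseteq \dbQ(\alp)$, while the relation $\alp = g(\gamma) \in \dbQ(\gamma)$ (using $g \in \dbZ[t] \subset \dbQ[t]$) gives the reverse inclusion. Hence $\dbQ(\gamma) = \dbQ(\alp)$, so
$$\deg m_\gamma = [\dbQ(\gamma):\dbQ] = [\dbQ(\alp):\dbQ] = \deg f = d,$$
the last equality being the irreducibility of $f$. Factor $F(t) = m_\gamma(t) h(t)$ in $\dbQ[t]$; then $\deg h = dk - d = d(k-1)$.

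To conclude the polysmoothness claim, note that every irreducible factor of $F$ over $\dbQ$ divides either $m_\gamma$ or $h$, hence has degree at most $\max(d, d(k-1)) = d(k-1)$, which for $k \ge 2$ equals $(1 - 1/k)(\deg f)(\deg g)$. This is exactly the bound required for $f$ to admit polysmoothness $1 - 1/k$; invoking Gauss's lemma transfers this factorisation from $\dbQ[t]$ to a factorisation into primitive integer polynomials of the same degrees, which is the relevant notion for the smoothness application.

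There is really no significant obstacle here: the argument is entirely formal, built on the fact that the minimal polynomial divides any polynomial vanishing at $\gamma$, combined with the degree identity $[\dbQ(\gamma):\dbQ] = \deg f$. The only point requiring any care is checking the two-sided inclusion $\dbQ(\gamma) = \dbQ(\alp)$, which would fail (with the degree identity failing in tandem) if one dropped the assumption $\gamma \in \dbQ(\alp)$; the hypothesis is used precisely to secure this matching of field degrees.
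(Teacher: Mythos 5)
Your proposal is correct and follows essentially the same route as the paper: deduce $m_\gamma \mid f(g)$ from $f(g(\gamma))=0$, establish $\dbQ(\gamma)=\dbQ(\alp)$ via the two-sided inclusion to get $\deg m_\gamma = d$, and bound the degrees of the irreducible factors of $f(g) = m_\gamma h$ by $d(k-1) = (1-1/k)dk$, invoking Gauss's lemma for integrality. If anything, your remark that each irreducible factor divides either $m_\gamma$ or $h$ is phrased slightly more carefully than the paper's corresponding sentence.
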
 

Suppose that $f\in \dbZ[t]$ is irreducible of degree $d$, and that $\alp$ is a root of $f$ lying 
in its splitting field. Then given any $\gam \in \dbQ(\alp)$ with 
$\dbQ(\gam)=\dbQ(\alp)$, since $\alp\in \dbQ(\gam)$, we find that there exists a polynomial 
$g\in \dbQ[t]$ of degree at most $d-1$ with $\alp=g(\gam)$. Thus, save for establishing 
that $\text{deg}(g)>1$ and further that the coefficients of $g$ may be taken to be integers, 
Theorem \ref{theorem1.3} recovers the conclusion of Schinzel. It is apparent, moreover, that 
there is the potential for this polynomial $g$ to have degree significantly smaller than that of 
$f$, and in such circumstances one does better than Schinzel.

\begin{corollary}\label{corollary1.4} Suppose that $f\in \dbZ[t]$ is irreducible, and let $\alp$ 
be a root of $f$ lying in its splitting field. Suppose that $f(t)=g(h(t))-t$, with 
$g,h\in \dbZ[t]$ of degree exceeding $1$. Then $f(g(t))$ is divisible by the minimal 
polynomial of $h(\alp)$ over $\dbQ$, and hence $f$ admits polysmoothness 
$1-1/\text{\rm deg}(g)$.
\end{corollary}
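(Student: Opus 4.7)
The plan is to derive this corollary as an immediate application of Theorem \ref{theorem1.3}. All that is required is to identify the correct element $\gam\in\dbQ(\alp)$ so that the theorem applies; once that is done, both conclusions are handed to us by Theorem \ref{theorem1.3}.

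To locate $\gam$, I would evaluate the identity $f(t)=g(h(t))-t$ at the root $t=\alp$. Since $f(\alp)=0$, this yields $g(h(\alp))=\alp$. The obvious candidate is therefore $\gam:=h(\alp)$, which lies in $\dbQ(\alp)$ because $h\in\dbZ[t]$. With this choice we have $\alp=g(\gam)$, where $g\in\dbZ[t]$ has degree $k=\deg(g)\ge 2$ by hypothesis. The hypotheses of Theorem \ref{theorem1.3} are therefore met, and that theorem yields at once that $f(g(t))$ is divisible by the minimal polynomial of $h(\alp)$ over $\dbQ$, and that $f$ admits polysmoothness $1-1/\deg(g)$.

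There is essentially no obstacle in the argument; the content lies entirely in recognising that the hypothesis $f(t)=g(h(t))-t$ is designed precisely so that $h(\alp)$ serves as the witness $\gam$ in Theorem \ref{theorem1.3}. The interest of the corollary is instead comparative: writing $d=\deg(f)=\deg(g)\deg(h)$, the general Schinzel bound gives polysmoothness of order $1-1/(d-2)$, whereas the corollary gives $1-1/\deg(g)$. This is a substantial improvement whenever $\deg(g)$ is small compared with $d$, that is, whenever the inner polynomial $h$ carries most of the degree of $f$.
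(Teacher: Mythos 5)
Your proof is correct and is essentially identical to the paper's own argument: evaluate $f(t)=g(h(t))-t$ at $t=\alp$ to get $\alp=g(h(\alp))$, and apply Theorem \ref{theorem1.3} with $\gam=h(\alp)$. The paper's proof adds only the (already implicit in Theorem \ref{theorem1.3}) remark that $\dbQ(\alp)=\dbQ(g(\gam))\subseteq\dbQ(\gam)$ forces the minimal polynomial of $h(\alp)$ to have degree $\deg(f)$.
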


The point here is that, since $f(\alp)=0$, one has $\alp=g(h(\alp))$, and so one can apply 
Theorem \ref{theorem1.3} with $\gam=h(\alp)$. Thus, for example, the polynomial 
$f(t)=t^4+4t^2-t+1$ satisfies the relation
$$f(t)=(t^2+1)^2+2(t^2+1)-t-2=g(h(t))-t,$$
with $g(t)=t^2+2t-2$ and $h(t)=t^2+1$. One may verify that $f$ is irreducible over $\dbQ$. 
Hence, if $\alp$ is a root of $f$ lying in its splitting field, one deduces from the 
corollary that $f(t^2+2t-2)$ is divisible by the minimal polynomial of $h(\alp)$ over $\dbQ$. 
Note that since $\alp=g(h(\alp))$, it is not possible that $h(\alp)$ lies in a proper subfield of 
$\dbQ(\alp)$, and hence its minimal polynomial has degree $4$. In this way, one finds that 
$f(t^2+2t-2)=m_1(t)m_2(t)$, for polynomials $m_i\in \dbZ[t]$ each having degree $4$. 
Indeed, one has
$$f(t^2+2t-2)=(t^4+4t^3-9t+5)(t^4+4t^3-7t+7).$$
Thus $f$ admits polysmoothness $\tfrac{1}{2}$, which is already sharper 
than the conclusion of Schinzel, which shows $f$ to admit polysmoothness at most 
$0.559\ldots $. From here, one can continue by iterating Schinzel's construction on 
$m_1m_2$, thereby showing that $f$ admits polysmoothness $\tfrac{1}{2}P(8)$, where 
$P(8)$ is defined as in (\ref{1.1}). In this way, one may verify that $f$ admits 
polysmoothness $0.41926274\ldots$.\par

The method underlying the proof of Theorem \ref{theorem1.1} may be seen as a hybrid of 
the cyclotomic construction of \S2 with the field theoretic approach described in \S3. This we 
describe in \S4. Another class of polynomials is susceptible to a decomposition in some ways 
reminiscent of the Aurifeuillian factorisations discussed by Granville and Pleasants in 
\cite{GP2006}. We illustrate our ideas in \S5 with the simplest classes of trinomials.

\begin{theorem}\label{theorem1.5} Suppose that $k$ is a natural number with $k\ge 2$.
\begin{enumerate}
\item[(i)] Let $f(t)=t^k+at^{k-1}-b$, with $a,b\in \dbZ$ and $b\ne 0$. Then $f$ admits 
polysmoothness $\phi(k-1)/(k-1)$.
\item[(ii)] Let $f(t)=at^k-t+b$, with $a,b\in \dbZ$ and $ab\ne 0$. Then $f$ admits 
polysmoothness $\phi(k)/k$.
\end{enumerate}
\end{theorem}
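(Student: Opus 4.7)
The plan is to treat both parts uniformly: construct a substitution $g \in \dbZ[t]$ so that $f(g(t))$ reduces, via an elementary algebraic identity, to a constant multiple of a difference of pure powers $U(t)^n - V(t)^n$. Once in this form, one invokes the homogenised cyclotomic factorisation
$$U^n - V^n = \prod_{d \mid n} \Phi_d^*(U,V),$$
where $\Phi_d^*(U,V) = V^{\phi(d)} \Phi_d(U/V)$ is a polynomial in $\dbZ[U,V]$ of total degree $\phi(d)$. Since $\phi(d) \le \phi(n)$ whenever $d \mid n$, the largest factor corresponds to $d = n$; choosing $n = k-1$ in part (i) and $n = k$ in part (ii) produces exactly the advertised polysmoothness ratios.

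For part (i), I would rewrite $f(t) = t^{k-1}(t+a) - b$ and seek $g$ so that $g(t)+a$ becomes a $(k-1)$st power multiple of $b$. The natural choice is
$$g(t) = b\,s(t)^{k-1} - a \qquad (s \in \dbZ[t]\text{ non-constant}),$$
which gives $g(t)+a = b\,s(t)^{k-1}$ and hence
$$f(g(t)) = g(t)^{k-1}\bigl(g(t)+a\bigr) - b = b\bigl[\bigl(g(t)s(t)\bigr)^{k-1} - 1\bigr].$$
The cyclotomic identity with $U = g(t)s(t)$ and $V = 1$ then produces factors $\Phi_d(g(t)s(t))$ of degree $\phi(d)\,k\deg s$, while $\deg g = (k-1)\deg s$. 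The dominant factor $(d=k-1)$ has degree $\phi(k-1)\,k\deg s = (\phi(k-1)/(k-1))\cdot k\deg g$, delivering polysmoothness $\phi(k-1)/(k-1)$.

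For part (ii), the analogous choice is
$$g(t) = a\,Y(t)^k + b \qquad (Y \in \dbZ[t]\text{ non-constant}),$$
so that $g(t) - b = a\,Y(t)^k$ and therefore
$$f(g(t)) = a\,g(t)^k - g(t) + b = a\bigl[g(t)^k - Y(t)^k\bigr].$$
The homogenised identity factorises the bracketed expression as $\prod_{d \mid k}\Phi_d^*(g(t),Y(t))$; because $\deg g = k\deg Y > \deg Y$, each factor $\Phi_d^*(g(t),Y(t)) = \prod_{\zeta}(g(t) - \zeta Y(t))$ (the product being over primitive $d$th roots of unity) has degree $\phi(d)\deg g$. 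The dominant factor $(d=k)$ has degree $\phi(k)\deg g = (\phi(k)/k)\cdot k\deg g$, as required.

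The genuine content in both parts lies in identifying the substitutions $g = bs^{k-1} - a$ and $g = aY^k + b$; once these are in hand, the remainder is routine. One need only check that the cyclotomic factors lie in $\dbZ[t]$, which is immediate since $\Phi_d, \Phi_d^* \in \dbZ[X,Y]$, and confirm that $\phi(d) \le \phi(n)$ for divisors $d$ of $n = k-1$ or $n = k$. I do not anticipate any serious obstacle beyond the initial discovery of these two substitutions.
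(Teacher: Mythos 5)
Your proposal is correct and follows essentially the same route as the paper: in part (i) the paper takes your $s(t)=bt$ (giving $g(t)=b^kt^{k-1}-a$) and in part (ii) your $Y(t)=at$ (giving $g(t)=a^{k+1}t^k+b$), then applies the same cyclotomic factorisation of $U^n-V^n$ with the same degree count. Your version with general non-constant $s$ and $Y$ is a harmless mild generalisation.
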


To illustrate the potential effectiveness of this theorem, consider the polynomial 
$f_k(t)=t^k-t-1$. It was shown by Selmer \cite{Sel1956} that $f_k$ is irreducible over 
$\dbQ$ for each $k\ge 2$. Meanwhile, Theorem \ref{theorem1.5} shows that $f$ admits 
polysmoothness $\phi(k)/k$, and this can be made arbitrarily close to $0$ by taking a 
sequence of exponents $k$ equal to the product of the first $n$ prime numbers, and letting 
$n\rightarrow \infty$. In this special case, the method of proof is simple to describe. We 
take a polynomial $g$ of large degree, and put $t=g(x)^k-1$. Thus we find that
$$f_k(g(x)^k-1)=(g(x)^k-1)^k-g(x)^k,$$
and so, as a difference of two $k$-th powers, we may employ a decomposition via 
cyclotomic polynomials to factorise $f_k(g(x)^k-1)$.\par

In our discussion of smooth values of polynomials, we have emphasised the application of 
polysmoothness to establish smoothness. An alternative approach to the problem of showing 
that polynomials take smooth values at integral arguments is via sieve theory. Typical of the 
kind of result that may be established is a conclusion of Dartyge, Tenenbaum, and the third 
author \cite{DMT2001}. Let $f\in \dbZ[t]$ be a polynomial of degree $d\ge 1$, and let 
$\eps>0$. Then, in particular, these authors show that for a positive proportion of integers 
$n$, the integer $N=|f(n)|$ is $N^{1-1/d+\eps}$-smooth. Thus $f$ admits smoothness 
$1-1/d+\eps$. Although weaker than the conclusions described above, this smoothness result 
has the merit that it applies for a positive proportion of the values represented by $f$. In the 
same vein, subject to the truth of a certain uniform quantitative form of the 
Schinzel--Sierpi\'nski Hypothesis, the third author \cite{Mar2002} has obtained an asympotic 
formula for the number of integers $n$ with $1\le n\le x$ for which $N=|f(n)|$ is 
$N^{1-1/(d-1)+\eps}$-smooth when $f$ is irreducible.
\vskip.1cm

\noindent{\bf Acknowledgements:} The first, third and fourth authors are grateful to the 
Heilbronn Institute for Mathematical Research for funding a visit of the third author to the 
University of Bristol in 2014 that laid the foundations for the work reported here. The third 
author's work is partially supported by a National Sciences and Engineering Research 
Council of Canada Discovery Grant. The fourth author's work is supported by a European 
Research Council Advanced Grant under the European Union's Horizon 2020 research and 
innovation programme via grant agreement No.~695223. The authors are grateful to 
Michael Stoll for providing the sextic example concluding \S6. 

\section{A cyclotomic construction} The polysmoothness question described in the 
introduction can be answered for polynomials $f\in \dbZ[t]$ equal to any product of 
binomials. Consider then integers $a_j$, $b_j$, $k_j$ with $k_j\ge 1$ $(1\le j\le l)$, and 
the polynomial
\begin{equation}\label{2.1}
f(t)=\prod_{j=1}^l(a_jt^{k_j}-b_j).
\end{equation}
The argument employed by Balog and the fourth author in their proof of \cite[Lemma 2.2]
{BW1998} is easily modified to confirm that $f$ admits polysmoothness $\eps$ for any 
$\eps>0$, as we now show.

\begin{theorem}\label{theorem2.1} Let $f\in \dbZ[t]$ be a polynomial of the shape 
(\ref{2.1}) with $a_1\cdots a_l\ne 0$. Then for some $c=c(\bfk)>0$, there are polynomials 
$g\in \dbZ[t]$ of arbitrarily large degree $d$ for which $f(g(t))$ factors as a product of 
polynomials of degree at most $cd/(\log \log d)^{1/l}$. Thus $f$ admits polysmoothness 
$\eps$ for any $\eps>0$.
\end{theorem}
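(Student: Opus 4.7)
The plan is to adapt the cyclotomic-polynomial factorisation of Balog and the fourth author \cite[Lemma 2.2]{BW1998}. The central algebraic identity is
\[
Y^n - Z^n \;=\; \prod_{d \mid n}\Phi_d^*(Y, Z),
\]
where $\Phi_d^*(Y, Z) = Z^{\varphi(d)}\Phi_d(Y/Z) \in \dbZ[Y, Z]$ is the homogeneous $d$th cyclotomic polynomial of degree $\varphi(d)$; taking $n$ to be the product of the first several primes makes $\varphi(n)/n$ as small as $c_0/\log\log n$, which supplies the main saving.

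For a single binomial $f(t) = at^k - b$, I would construct $g \in \dbZ[t]$ of large degree $d$ together with polynomials $H, K \in \dbZ[t]$ of degree approximately $d/n$ such that $a g(t)^k - b$ is essentially the principal factor of $H(t)^n - K(t)^n$ for a highly composite $n$. The cyclotomic identity then splits $f(g(t))$ into pieces of degree at most $\varphi(n)\max(\deg H, \deg K) \ll d/\log\log d$. Concretely, one takes $g$ of the form $g(t) = c\cdot h(t)^m + \text{lower-order corrections}$, with $h \in \dbZ[t]$ of modest degree and $c, m$ chosen so that the leading and constant terms of $a g^k - b$ align with a product $c'\prod_{d \mid n}\Phi_d^*(H, K)$; when $b/a$ is not a perfect $k$th power (as in $f(t) = t^2 - 2$), the obstruction is absorbed by allowing both $H$ and $K$ to be non-constant polynomials of comparable degree, supplying the extra freedom needed to match coefficients over $\dbZ$.

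For the product of $l$ binomials $f(t) = \prod_{j=1}^l(a_jt^{k_j} - b_j)$, the same $g$ must realise the above construction simultaneously for each factor $a_j g^{k_j} - b_j$. I would distribute the highly composite structure across the $l$ binomials by using an amalgamated exponent $N = n_1 n_2 \cdots n_l$, so that each binomial inherits only an $l$th root of the full cyclotomic saving, yielding factors of size $\le c(\mathbf{k})\, d/(\log\log d)^{1/l}$. The principal technical obstacle lies in the first step: verifying that the polynomial identity $a g^k - b = c'\prod_{d \mid n}\Phi_d^*(H, K)$ can be realised over $\dbZ$ for every $a, b$ and $k$, possibly modulo a low-degree residual which can then be handled by an auxiliary application of Schinzel's trivial construction $f(t+f(t)) = f(t)Q(t)$. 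Once this realisation is in place, the remainder of the proof is a degree optimisation: choose the $n_j$ as products of the first several primes in a manner depending on $\mathbf{k}$, and balance the degrees of $H_j, K_j, g$ so that the maximum factor degree of $f(g(t))$ attains the claimed bound, and hence $f$ admits polysmoothness $\eps$ for every $\eps>0$.
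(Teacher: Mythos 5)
You have correctly identified the engine of the argument: the factorisation $z^n-1=\prod_{e\mid n}\Phi_e(z)$ with $n$ built from small primes so that $\phi(n)/n\ll 1/\log\log n$, together with the idea of splitting the available primes $l$ ways so that each binomial receives an $l$-th root of the saving (this is exactly what \cite[Lemma 2.1]{BW1998} supplies). However, the step you yourself flag as ``the principal technical obstacle'' --- actually realising each $a_jg(t)^{k_j}-b_j$ as a product of cyclotomic values over $\dbZ$ --- is the entire content of the proof, and the mechanism you propose for it (non-constant $H$ and $K$ of comparable degree, lower-order corrections, and a low-degree residual absorbed by $f(t+f(t))=f(t)Q(t)$) is neither substantiated nor needed. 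The resolution is much simpler: take $g$ to be a \emph{monomial}, $g(t)=Ct^{\Gamma}$, where $\Gamma=\gamma_1\cdots\gamma_l$ with $\gamma_j$ the product of the $j$-th class of primes (all chosen coprime to $k_1\cdots k_l$), and where $C=\prod_j a_j^{\lambda_j}b_j^{\mu_j}$ has exponents chosen by congruences modulo the $\gamma_j$ so that $a_jC^{k_j}/b_j$ is a perfect $\gamma_j$-th power of a monomial in the $a_i,b_i$ for each $j$. Since $t^{k_j\Gamma}=(t^{k_j\Gamma_j})^{\gamma_j}$ automatically, one obtains exactly $a_jg(t)^{k_j}-b_j=b_j(z_j^{\gamma_j}-1)$ with $z_j$ a monomial of degree $k_j\Gamma_j$. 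In particular, the worry that $b/a$ may fail to be a perfect $k$-th power is not an obstruction: the relevant congruences are solvable precisely because $\gamma_j$ is coprime to $k_j\Gamma_j$, so one takes $K=1$ rather than making both $H$ and $K$ non-constant.

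As written, then, your argument has a genuine gap: you assert, but do not verify, that an identity of the form $ag^k-b=c'\prod_{d\mid n}\Phi_d^*(H,K)$ can be realised over $\dbZ$, and the fallback of a ``low-degree residual'' handled by Schinzel's trivial construction has no justification --- there is no reason such a residual would itself factor into small pieces, and $f(t+f(t))=f(t)Q(t)$ controls only a single factor, not the cofactor. Until the monomial construction (or an equivalent explicit realisation) is supplied, the proof is incomplete, although the surrounding strategy is the right one.
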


\begin{proof} Write $k=k_1\cdots k_l$, and let $y$ be a natural number sufficiently large in 
terms of $k$. Then it follows from \cite[Lemma 2.1]{BW1998} that the prime numbers not 
exceeding $y$ and coprime to $k$ can be partitioned into $l$ sets $\calP_1,\ldots ,\calP_l$ 
with the property that for each $i$, one has
\begin{equation}\label{2.2}
\prod_{p\in \calP_i}(1-1/p)<2\biggl( \frac{k}{\phi(k)\log y}\biggr)^{1/l}\quad \text{and}
\quad \prod_{p\in \calP_i}p<y^2e^{5y/(4l)}.
\end{equation}
Put
$$\gam_i=\prod_{p\in \calP_i}p\quad \text{and}\quad \Gam_i=\prod_{\substack{1\le j\le l\\
j\ne i}}\gam_j,$$
and write $\Gam=\gam_1\cdots \gam_l$. It follows from standard prime number estimates 
that $k^{-1}e^{3y/4}<\Gam<e^{5y/4}$. Since $(k_j\Gam_j,\gam_j)=1$ $(1\le j\le l)$, we 
find that for each index $j$ there exist integers $\lamtil_j,\mutil_j$ with 
$1\le \lamtil_j,\mutil_j<\gam_j$, and satisfying
$$k_j\Gam_j\lamtil_j\equiv -1\mmod{\gam_j}\quad \text{and}\quad 
k_j\Gam_j\mutil_j\equiv 1\mmod{\gam_j}.$$
We put $\lam_j=\Gam_j\lamtil_j$ and $\mu_j=\Gam_j\mutil_j$ $(1\le j\le l)$. Also, when 
$1\le i,j\le l$ and $i\ne j$, we define the integers $\Lam_{ij}$ and $\Mu_{ij}$ via the relations
$$\Lam_{ij}=k_j\lam_i/\gam_j\quad \text{and}\quad \Mu_{ij}=k_j\mu_i/\gam_j,$$
and
$$\Lam_{jj}=(k_j\lam_j+1)/\gam_j\quad \text{and}\quad \Mu_{jj}=(k_j\mu_j-1)/\gam_j.$$

\par We are now equipped to define the auxiliary polynomial
$$g(t)=t^\Gam \prod_{j=1}^la_j^{\lam_j}b_j^{\mu_j}.$$
This polynomial has degree $\Gam$ satisfying $k^{-1}e^{3y/4}<\Gam<e^{5y/4}$, whence
\begin{equation}\label{2.3}
y\asymp \log \Gam.
\end{equation}
Moreover, when $1\le j\le l$, one has $a_jg(t)^{k_j}-b_j=b_j(z_j^{\gam_j}-1)$, where
$$z_j=t^{k_j\Gam_j}\prod_{i=1}^la_i^{\Lam_{ij}}b_i^{\Mu_{ij}}.$$
But $z_j^{\gam_j}-1=\prod_{e|\gam_j}\Phi_e(z_j)$, where $\Phi_e$ denotes the $e$-th 
cyclotomic polynomial. It therefore follows that
$$f(g(t))=\prod_{j=1}^lb_j(z_j^{\gam_j}-1)$$
factors as a product of polynomials of degree at most
$$\max_{1\le j\le l}\max_{e|\gam_j}\phi(e)\text{\rm deg}(z_j)=\max_{1\le j\le l}k_j\Gam_j
\phi(\gam_j)=\Gam \max_{1\le j\le l}\frac{k_j\phi(\gam_j)}{\gam_j}.$$
But in view of (\ref{2.2}) and (\ref{2.3}), one has
$$\frac{k_j\phi(\gam_j)}{\gam_j}=k_j\prod_{p\in \calP_j}(1-1/p)<2k_j
\biggl( \frac{k}{\phi(k)\log y}\biggr)^{1/l}\ll (\log \log \Gam)^{-1/l}.$$
Then we are forced to conclude that there is a number $c=c(\bfk)>0$ for which $f(g(t))$ 
factors as a product of polynomials of degree at most $c\Gam/(\log \log \Gam)^{1/l}$, 
where $\Gam=\text{\rm deg}(g)$. This completes the proof of the theorem.
\end{proof}

The special case of Theorem \ref{theorem2.1} corresponding to the polynomial
$$f(t)=(a_1t-b_1)(a_2t-b_2),$$
with $l=2$, $k_1=k_2=1$, confirms the conclusion of Theorem \ref{theorem1.1} in the 
special case of quadratic polynomials that factor as a product of two linear factors. We may 
consequently restrict attention in our proof of Theorem \ref{theorem1.1} in \S4 to irreducible 
quadratic polynomials.

\section{Field theoretic constructions} We begin in this section by describing the proof of 
Theorem \ref{theorem1.3}. This permits an abstract explanation of the construction of 
Schinzel described in the introduction, though for the sake of simplicity we restrict ourselves 
in such matters to monic polynomials.

\begin{proof}[The proof of Theorem \ref{theorem1.3}] Let $f\in \dbZ[t]$ be irreducible of 
degree $d\ge 2$, and let $\alp$ be a root of $f$ lying in its splitting field. Suppose 
that $\gam\in \dbQ(\alp)$, and that for some $g\in \dbZ[t]$ of degree $k\ge 2$, one has 
$\alp=g(\gam)$. Since $f(g(\gam))=f(\alp)=0$, it follows that the minimal polynomial $m$ 
of $\gam$ over $\dbQ$ divides $f(g)$. By Gauss' Lemma, we infer that $f(g)$ is divisible by 
an integral multiple $\mtil$ of $m$ lying in $\dbZ[t]$. One has
$$\dbQ(\gam)\subseteq \dbQ(\alp)=\dbQ(g(\gam))\subseteq \dbQ(\gam),$$
and hence
$$\text{deg}(\mtil)=[\dbQ(\gam):\dbQ]=[\dbQ(\alp):\dbQ]=\text{deg}(f)=d.$$
Then for some polynomial $h\in \dbZ[t]$, one has $f(g)=\mtil h$, with $\text{deg}(\mtil)=d$, 
$\text{deg}(f(g))=kd$ and $\text{deg}(h)=kd-d$. We thus conclude that every polynomial 
factor of $f(g)$ has degree at most $d(k-1)$, whence $f$ admits polysmoothness $1-1/k$. 
This completes the proof of Theorem \ref{theorem1.3}.
\end{proof}

The factorisation of $f(g(x))$ is in general closely related to the factorisation of $g(x)-\alp$, 
as various authors have noticed. The conclusion of Theorem \ref{theorem1.3} is closely 
related to the following proposition, which Schinzel \cite[Theorem 22]{Sch2000} attributes to 
Capelli. (This proposition also appears, in a slightly infelicitous form, as 
\cite[Lemma 1]{GP2006}.)

\begin{proposition}\label{lemma3.1} Let $f\in \dbQ[t]$ be monic and irreducible, let $\alp$ 
be any root of $f$ in its splitting field, and put $K=\dbQ(\alp)$. Then, for any 
$g\in \dbQ[t]$, if the factorisation of $g(t)-\alp$ as a product of irreducibles over $K[t]$ is
$$g(t)-\alp=a_1(t;\alp)^{r_1}\cdots a_k(t;\alp)^{r_k},$$
then the factorisation of $f(g(t))$ as a product of irreducibles over $\dbQ[t]$ is
$$f(g(t))=A_1(t)^{r_1}\cdots A_k(t)^{r_k},$$
with
$$A_j(t)=\prod_{f(\bet)=0}a_j(t;\bet)\quad (1\le j\le k).$$
\end{proposition}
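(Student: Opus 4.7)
The plan is to work in the splitting field $L$ of $f$ over $\dbQ$, and to exploit the action of the Galois group $G=\mathrm{Gal}(L/\dbQ)$ on the roots of $f$. Since $f$ is monic with roots $\{\bet\}$, I first write the identity
$$f(g(t))=\prod_{f(\bet)=0}\bigl(g(t)-\bet\bigr)$$
in $L[t]$. For each root $\bet$ of $f$ there is a unique field isomorphism $\sig_\bet\colon \dbQ(\alp)\to \dbQ(\bet)$ sending $\alp\mapsto\bet$; applying $\sig_\bet$ coefficientwise to the given factorisation $g(t)-\alp=a_1(t;\alp)^{r_1}\cdots a_k(t;\alp)^{r_k}$, one obtains
$$g(t)-\bet=a_1(t;\bet)^{r_1}\cdots a_k(t;\bet)^{r_k}$$
with each $a_j(t;\bet)$ irreducible over $\dbQ(\bet)[t]$ (being the $\sig_\bet$-image of an irreducible). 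Multiplying over all $\bet$ then gives $f(g(t))=\prod_j A_j(t)^{r_j}$ with $A_j$ as defined.

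Next, I would verify that $A_j(t)\in \dbQ[t]$. Extending each $\sig_\bet$ to an element of $G$ and noting that $G$ permutes the roots of $f$, the coefficients of $A_j(t)$ are symmetric functions of the roots $\bet$, hence invariant under $G$, and therefore lie in $\dbQ$. (If one prefers to avoid assuming $L/\dbQ$ Galois, the same conclusion follows from the standard fact that a symmetric polynomial in the conjugates lies in the base field.)

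The main step is irreducibility of each $A_j$ over $\dbQ$. Let $\tet$ be a root of $a_j(t;\alp)$ in some extension of $K=\dbQ(\alp)$. Since $\tet$ is also a root of $g(t)-\alp$, one has $g(\tet)=\alp$, so $\alp\in\dbQ(\tet)$ and therefore $K\subseteq \dbQ(\tet)$. This forces $\dbQ(\tet)=K(\tet)$, and because $a_j(t;\alp)$ is irreducible over $K$ it is the minimal polynomial of $\tet$ over $K$. The tower formula now gives
$$[\dbQ(\tet):\dbQ]=[K(\tet):K]\cdot[K:\dbQ]=\deg a_j(t;\alp)\cdot d.$$
On the other hand $\deg A_j=\sum_\bet \deg a_j(t;\bet)=d\cdot\deg a_j(t;\alp)$ (the degrees agreeing since the $a_j(t;\bet)$ are $G$-conjugates), and $\tet$ is a root of $A_j$. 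Matching degrees forces $A_j$ to be the minimal polynomial of $\tet$ over $\dbQ$, and in particular irreducible.

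The principal obstacle is the irreducibility argument: it relies on the crucial observation $\alp=g(\tet)\in\dbQ(\tet)$, which sneaks the entire field $K$ into $\dbQ(\tet)$ for free and makes the tower degrees multiply cleanly. Once this point is recognised, the rest is bookkeeping. One should also take modest care in the proof that the $\sig_\bet$-image of an irreducible factor remains irreducible, and that distinct $a_j(t;\alp)$ produce distinct $A_j(t)$ (no factors get identified across different $\bet$); both are routine consequences of applying $\sig_\bet^{-1}$ to any hypothetical common divisor.
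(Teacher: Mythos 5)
Your proof is correct. Note that the paper itself offers no proof of this proposition: it is stated as a known result, attributed to Capelli via Schinzel's book and also appearing in Granville--Pleasants, so there is no in-paper argument to compare against. Your argument is the standard one and is complete: the identity $f(g(t))=\prod_{f(\bet)=0}(g(t)-\bet)$, transport of the factorisation by the conjugation isomorphisms $\sig_\bet$, Galois (or symmetric-function) descent of the $A_j$ to $\dbQ[t]$, and the key degree count $[\dbQ(\tet):\dbQ]=[K(\tet):K]\,[K:\dbQ]=\deg A_j$, which hinges precisely on the observation $\alp=g(\tet)\in\dbQ(\tet)$ that you single out. The two points you defer as routine really are: the $\sig_\bet$-image of an irreducible is irreducible, and if $a_i(t;\alp)$ and $a_j(t;\bet)$ shared a root $\tet$ then $\alp=g(\tet)=\bet$ and irreducibility over $K$ forces $i=j$, so the $A_j$ are pairwise non-associate and the displayed factorisation is genuinely the irreducible one.
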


Theorem \ref{theorem1.3} follows from Proposition \ref{lemma3.1} as the special case in 
which one of the irreducible factors $a_i(t;\alp)$ is linear. It is apparent that, in the setting of 
Proposition \ref{lemma3.1}, the wider generality that it has the potential to offer may be 
exploited to improve polysmoothness bounds for $f$ whenever one has corresponding 
polysmoothness bounds for polynomials $g(t)-\alp$ over $\dbQ(\alp)$.\par

The idea underlying the construction of Schinzel described in \cite[Lemma 10]{Sch1967} can 
be interpreted in the guise of Theorem \ref{theorem1.3} as follows. We restrict attention to 
monic irreducible polynomials
$$f(t)=t^d+a_{d-1}t^{d-1}+\ldots +a_1t+a_0.$$
Let $\alp$ be a root of $f$ lying in its splitting field, and put $\bet=1/\alp$. Then 
since $f(\alp)=0$, it follows that
$$\alp=-(a_{d-1}+a_{d-2}\bet+\ldots +a_0\bet^{d-1}).$$
We take $g(t)=-(a_0t^{d-1}+\ldots +a_{d-1})$. Then $\alp=g(\bet)$ with $g\in \dbZ[t]$ a 
polynomial of degree $d-1$. Theorem \ref{theorem1.3} consequently delivers the 
conclusion that $f$ admits polysmoothness $1-1/(d-1)$.\par

Two comments are in order here. First, the restriction that $f$ be irreducible is easily 
negotiated away with a little careful thought. Second, the condition that $f$ be monic in the 
above argument can be surmounted with the application of carefully chosen shifts, as 
Schinzel demonstrates. This is a little delicate, and we have chosen to avoid such technical 
issues with the hope that the underlying ideas may be more clearly visible from our simplified 
discussion.\par

Incidentally, the strategy employed in the above argument is relevant to the question raised 
by Granville and Pleasants following \cite[Corollary 1]{GP2006}:

\begin{question*}[Granville and Pleasants] Suppose that $f(t)\in \dbQ[t]$ is irreducible. Can 
one find infinitely many $g(y)\in \dbQ[y]$ with $\text{\rm deg}(g)<\text{\rm deg}(f)$ for 
which $f(g(y))$ is reducible in $\dbQ[y]$, where the $g(y)$ are distinct under 
transformations replacing $y$ by a polynomial in $y$?
\end{question*}

When $f\in \dbQ[t]$ is irreducible of degree $2$, and $g\in \dbQ[y]$ has degree smaller than 
that of $f$, it is apparent that $g$ is linear, and hence the answer to this question is 
negative. Suppose then that $f$ has degree $d\ge 3$, and let $\alp$ be a root of $f$ in its 
splitting field. Thus $[\dbQ(\alp):\dbQ]=d$. We take $\bet$ to be any element of 
$\dbQ(\alp)$ not lying in $\text{\rm span}_\dbQ\{1,\alp\}$ with $[\dbQ(\bet):\dbQ]=d$. 
There are infinitely many such elements $\bet$. Then since 
$\dbQ(\bet)\subseteq \dbQ(\alp)$ and $[\dbQ(\bet):\dbQ]=[\dbQ(\alp):\dbQ]$, one has  
$\dbQ(\bet)=\dbQ(\alp)$. But $\alp\in \dbQ(\bet)$, so there exists a polynomial 
$g\in \dbQ[y]$ of degree at most $d-1$ with the property that $\alp=g(\bet)$. Furthermore, 
since $\bet\not\in \text{\rm span}_\dbQ\{1,\alp\}$, one sees that $\text{\rm deg}(g)>1$. 
We have $f(g(\bet))=f(\alp)=0$, so that $f(g(y))$ is divisible by the minimal polynomial of 
$\bet$ over $\dbQ$. Since the latter has degree $[\dbQ(\bet):\dbQ]=d$, it follows that 
$f(g(y))$ is reducible, and yet $\text{\rm deg}(g)\le d-1<\text{\rm deg}(f)$. What is unclear 
is whether or not the polynomials $g$ generated by this process are distinct under 
polynomial transformations, although it seems unlikely that all of these polynomials could be 
generated by a finite set by such substitutions. In the cubic case, however, we are able to 
resolve this issue.

\begin{theorem}\label{theorem3.2}
Suppose that $f(t)\in \dbQ[t]$ is irreducible of degree $3$. Then there are infinitely many 
quadratic polynomials $g(y)\in \dbQ[y]$ for which $f(g(y))$ is reducible in $\dbQ[y]$, where 
the $g(y)$ are distinct under transformations replacing $y$ by a polynomial in $y$.
\end{theorem}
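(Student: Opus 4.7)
The plan is to exhibit an explicit one-parameter family of quadratic polynomials $\{g_c\}_{c\in \dbQ}$ with $f(g_c(y))$ reducible, and then to show that distinct parameters $c\in \dbQ$ yield pairwise inequivalent $g_c$. Let $\alp$ be a root of $f$ in its splitting field, set $K=\dbQ(\alp)$, and for each $c\in \dbQ$ put $\bet_c=\alp^2+c\alp$. Since $[K:\dbQ]=3$ is prime there are no proper intermediate subfields, so $\dbQ(\bet_c)$ equals $\dbQ$ or $K$. The former would give a nontrivial polynomial relation of degree at most $2$ for $\alp$ over $\dbQ$, contradicting $[\dbQ(\alp):\dbQ]=3$; hence $\dbQ(\bet_c)=K$. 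Since the $\alp^2$-coordinate of $\bet_c$ in the basis $\{1,\alp,\alp^2\}$ equals $1\ne 0$, we also have $\bet_c\notin \text{\rm span}_\dbQ\{1,\alp\}$, so the unique polynomial $g_c\in \dbQ[y]$ with $\alp=g_c(\bet_c)$ furnished by the paragraph preceding the statement has degree exactly $2$. By Theorem \ref{theorem1.3}, $f(g_c(y))$ is divisible by the minimal polynomial of $\bet_c$ over $\dbQ$, which has degree $3$, and hence $f(g_c(y))$ factors nontrivially in $\dbQ[y]$.

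The central step is the verification of inequivalence. Suppose $g_{c_1}(y)=g_{c_2}(h(y))$ for some polynomial $h\in \dbQ[y]$; since both $g_{c_i}$ are quadratic, a degree count forces $\text{\rm deg}\,h=1$, so write $h(y)=ay+b$ with $a,b\in \dbQ$ and $a\ne 0$. Substituting $y=\bet_{c_1}$ yields $g_{c_2}(a\bet_{c_1}+b)=\alp$, so $a\bet_{c_1}+b$ is a root of the quadratic $g_{c_2}(Y)-\alp\in K[Y]$. One root is $\bet_{c_2}$; the sum of the two roots equals a rational number (determined by the two leading coefficients of $g_{c_2}$), so the other root has the form $r-\bet_{c_2}$ with $r\in \dbQ$.

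Two cases arise: either $a\bet_{c_1}+b=\bet_{c_2}$ or $a\bet_{c_1}+b=r-\bet_{c_2}$. Expanding both sides in the basis $\{1,\alp,\alp^2\}$ and matching coefficients delivers $a=1$, $b=0$, $c_1=c_2$ in the first case, and $a=-1$, $c_1=c_2$ in the second. Either way $c_1=c_2$, so the family $\{g_c\}_{c\in \dbQ}$ contains infinitely many pairwise inequivalent members, completing the argument. The only real subtleties lie in pinning down the equivalence relation (the substituting polynomial must be linear, by degrees) and in locating the second root of $g_{c_2}(Y)-\alp$ inside $K$; both reduce to routine linear algebra over $K$.
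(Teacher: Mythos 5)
Your proposal is correct. The construction itself coincides with the paper's: you take $\bet$ outside $\mathrm{span}_\dbQ\{1,\alp\}$, write $\alp=g(\bet)$ with $g$ quadratic, and obtain reducibility because $f(g(y))$ (of degree $6$) is divisible by the degree-$3$ minimal polynomial of $\bet$. (One pedantic note: Theorem \ref{theorem1.3} is stated for $g\in\dbZ[t]$, whereas here $g\in\dbQ[y]$; the divisibility argument is identical over $\dbQ$, and indeed the paper invokes it directly rather than via the theorem.) Where you genuinely diverge is the inequivalence step, which is the bulk of the paper's proof. The paper deduces from $g\sim g'$ only that the minimal polynomials satisfy $m_{\bet'}\sim m_\bet$, and must then track which of the up to three roots of $m_\bet$ lying in $\dbQ(\alp)$ the element $a\bet'+b$ could equal; this forces a case split on whether $m_\bet$ splits over $\dbQ(\alp)$ and, in the split case, a counting argument bounding the number of ratios $B_i/A_i$ by $3^N$. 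You instead evaluate the functional equation $g_{c_1}(y)=g_{c_2}(ay+b)$ at $y=\bet_{c_1}$, which places $a\bet_{c_1}+b$ among the \emph{two} explicitly known roots $\bet_{c_2}$ and $r-\bet_{c_2}$ of the quadratic $g_{c_2}(Y)-\alp$, and your normalised family $\bet_c=\alp^2+c\alp$ makes the coefficient comparison in the basis $\{1,\alp,\alp^2\}$ immediate ($a=\pm 1$ and $c_1=c_2$ in either case). This buys a shorter, cleaner argument with no case analysis; the paper's more roundabout route reflects its choice to work with an arbitrary $\bet=A\alp^2+B\alp+C$ rather than a normalised one-parameter family.
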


\begin{proof} We follow the construction described above, employing the same notation, and 
initially seek a more detailed description of the factorisations of the compositions in question. 
Thus, for a quadratic polynomial $g\in \dbQ[y]$, one finds that $f(g(y))$ is divisible by the 
minimal polynomial $m_\bet$ of $\bet$ over $\dbQ$, and $\text{\rm deg}(m_\bet)=3$. Thus 
$f(g(y))=m_\bet(y)l(y)$, for some polynomial $l\in \dbQ[y]$ of degree $3$. Note that 
$\bet\in \dbQ(\alp)$ is one root of the quadratic polynomial $g(y)-\alp$. The second root 
$\gam$ must also lie in $\dbQ(\alp)$. We observe that $\gam$ cannot be a root of 
$m_\bet$, for then one would have that $g(y)-\alp$ divides $m_\bet(y)$. The quotient 
$q(y)$ here is linear, and cannot lie in $\dbQ[y]$, since $m_\bet(y)$ is irreducible over 
$\dbQ[y]$. But the leading coefficient of $q(y)$ is rational, so the coefficient of $y^2$ in 
$m_\bet(y)$ cannot be rational, leading to a contradiction. Thus, indeed, we have 
$m_\bet(\gam)\ne 0$, confirming our earlier assertion.\par

The polynomial $l$ cannot have linear or quadratic factors over $\dbQ[y]$, for any root 
$\tet$ of such a factor would supply a root $g(\tet)$ of $f$ lying in a field extension of 
$\dbQ$ of degree $1$ or $2$, contradicting the irreducibility of $f$. Then $l(y)$ is a scalar 
multiple of a cubic polynomial irreducible over $\dbQ[y]$. But $\gam$ is a root of this 
polynomial, so that $l$ is a scalar multiple of its minimal polynomial $m_\gam$, and we have 
$f(g)=\kap m_\bet m_\gam$, for some non-zero rational number $\kap$. Moreover, since 
$g\in \dbQ[y]$ and both $\bet$ and $\gam$ are roots of the quadratic polynomial 
$g(y)-\alp$, then an examination of the coefficient of $y$ in the latter polynomial reveals that 
$\bet+\gam\in \dbQ$. There is therefore a rational number $r$ for which $\gam=r-\bet$, 
and we have $f(g(y))=\kap m_\bet(y)m_{r-\bet}(y)$.\par

We next attend to the matter of confirming that infinitely many of these polynomials $g$ are 
distinct under transformations replacing $y$ by a polynomial in $y$. It is apparent that the 
only possibility for such a transformation is a linear one taking $y$ to $ay+b$ for some 
rational numbers $a$ and $b$ with $a\ne 0$. Motivated by this observation, when 
$F,G\in \dbQ[y]$, we write $F\sim G$ when there exist $a,b\in \dbQ$ with $a\ne 0$ for which 
$F(y)=G(ay+b)$. It is readily confirmed that this relation defines an equivalence relation on 
elements of $\dbQ[y]$. Returning now to the discussion of the previous paragraph, one may 
check that
$$f\left(g\left( \frac{y-b}{a}\right)\right) =\kap m_{a\bet+b}(y)m_{a(r-\bet)+b}(y).$$
Hence, whenever $G\sim g$, then $f(G(y))=\kap h_1(y)h_2(y)$ for some monic polynomials 
$h_1,h_2\in \dbQ[y]$ with $h_i\sim m_\bet$ $(i=1,2)$.\par

Suppose that $\bet=A\alp^2+B\alp+C$ and $\bet'=A'\alp^2+B'\alp+C'$, with 
$A,B,C\in \dbQ$ satisfying $A\ne 0$, and likewise for the decorated analogues of these 
coefficients. Consider the composition factorisations
$$f(g(y))=\kap m_\bet(y)m_{r-\bet}(y)\quad \text{and}\quad f(g'(y))=\kap' m_{\bet'}(y)
m_{r'-\bet'}(y)$$
induced from these elements by the process described above. If $g\sim g'$, then the 
conclusion of the previous paragraph shows that one must have $m_{\bet'}\sim m_\bet$. 
It is possible that $\bet$ is the only root of $m_\bet$ lying in $\dbQ(\alp)$, in which case 
we see that for some $a,b\in \dbQ$ with $a\ne 0$, one must have $\bet'=a\bet+b$. Thus, 
since $[\dbQ(\alp):\dbQ]=3$, it follows that $A'=aA$ and $B'=aB$, whence $B'/A'=B/A$. 
In such circumstances, it follows that the equivalence classes for $g$ are classified by 
distinct ratios $B/A$, of which there are infinitely many, and the conclusion of the theorem 
follows. It is possible, meanwhile, that $m_\bet$ splits over $\dbQ(\alp)[y]$. One then 
has $m_\bet(y)=(y-\bet_1)(y-\bet_2)(y-\bet_3)$, with $\bet_i=A_i\alp^2+B_i\alp+C_i$, 
for suitable rational coefficients $A_i,B_i,C_i$. In such circumstances, a similar argument to 
that just employed reveals that for suitable rational numbers $a$ and $b$ with $a\ne 0$, and 
for some $i\in \{1,2,3\}$, one has $\bet'=a \bet_i+b$. In particular, one sees that 
$A_i\ne 0$ and $B'/A'=B_i/A_i$. Consequently, were there to be at most $N$ distinct 
equivalence classes for the polynomials $g$ generated by choices for 
$\bet=A\alp^2+B\alp+C$, then the number of possible ratios $B_i/A_i$ occurring amongst 
the associated roots $\bet_i=A_i\alp^2+B_i\alp+C_i$ would be at most $3^N$. Since there 
are infinitely many such ratios available to us, we derive a contradiction to the hypothesis 
that the number of equivalence classes is finite. Once again, therefore, we obtain the 
conclusion of the theorem.
\end{proof} 

We are confident that a somewhat more elaborate argument would establish the quartic 
analogue of this theorem. Degrees exceeding $4$, on the other hand, would appear to be 
substantially more challenging. We finish this section by establishing Corollary 
\ref{corollary1.4}.

\begin{proof}[The proof of Corollary \ref{corollary1.4}] Assume the hypotheses of the 
statement of Corollary \ref{corollary1.4}, so that $f(t)=g(h(t))-t$. Let $\alp$ be a root of 
$f$ lying in its splitting field. Then $\alp=g(h(\alp))$, so one can apply Theorem 
\ref{theorem1.3} with $\gam=h(\alp)$. All that remains is to observe that the minimal 
polynomial of $\gam$ over $\dbQ$ must have degree $\text{deg}(f)=[\dbQ(\alp):\dbQ]$, 
since $\dbQ(\alp)=\dbQ(g(\gam))\subseteq \dbQ(\gam)$.
\end{proof}

\section{Smoothness of quadratic polynomials} Our goal in this section is the proof of 
Theorem \ref{theorem1.1} in the situation that $f\in \dbZ[t]$ is quadratic and irreducible. As 
we have commented already at the end of \S2, this special case is all that we must now 
address in order to complete the proof of Theorem \ref{theorem1.1}. Our argument can be 
construed as a hybrid of the methods discussed in \S\S2 and 3. We begin with an auxiliary 
lemma, the utility of which will become apparent in due course.

\begin{lemma}\label{lemma4.1} Let $f(t)=at^2+bt+c\in \dbZ[t]$ be irreducible with 
$a\ne 0$. Denote by $\alp$ a root of $f$ in its splitting field. Then for any $k\in \dbN$, there 
exist integers $m$, $n$, $A$ and $B$ with $A\ne 0$ and $(A,B)=1$ such that 
$(ma\alp+n)^k=A\alp+B$.
\end{lemma}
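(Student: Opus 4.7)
The plan is to take $m=1$ and choose $n\in\dbZ$ using a case analysis on primes. Since $a\alp^2+b\alp+c=0$, the element $\bet:=a\alp$ satisfies $\bet^2+b\bet+ac=0$, so $\bet$ is an algebraic integer with $\dbZ[\bet]=\dbZ\oplus\dbZ\bet$, and the minimal polynomial of $\bet$ over $\dbQ$ is $t^2+bt+ac$. Setting $\gam_n:=\bet+n\in\dbZ[\bet]$, there are unique integers $P_n,Q_n$ with $\gam_n^k=P_n\bet+Q_n$, so that $(a\alp+n)^k=aP_n\alp+Q_n$. It therefore suffices to exhibit an $n$ with $P_n\ne 0$ (so $A:=aP_n\ne 0$) and $\gcd(aP_n,Q_n)=1$ (with $B:=Q_n$).

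Expanding $\gam_n^k=\sum_{j=0}^k\binom{k}{j}n^{k-j}\bet^j$ and reducing via $\bet^2=-b\bet-ac$, I observe that $P_n$ is a polynomial in $n$ of degree $k-1$ with leading coefficient $k$, so $P_n\ne 0$ for all but finitely many $n$. Applying the ring surjection $\dbZ[\bet]\twoheadrightarrow\dbZ[\bet]/(\bet)\cong\dbZ/(ac)$ (under which $\gam_n\mapsto n$ and hence $\gam_n^k\mapsto n^k$), one finds $Q_n\equiv n^k\pmod{ac}$.

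The coprimality condition I verify prime by prime, exploiting the three standard structures on $\dbZ[\bet]/(p)\cong\dbF_p[t]/(t^2+bt+ac)$. If $p\nmid a(b^2-4ac)$, the quotient is reduced (either $\dbF_{p^2}$ or $\dbF_p\times\dbF_p$); here $\gam_n\not\equiv 0\pmod p$ (since $\{1,\bet\}$ is $\dbF_p$-linearly independent) forces $\gam_n^k\not\equiv 0\pmod p$, so $p\nmid\gcd(P_n,Q_n)$ automatically, with no condition on $n$. If $p\mid a$, the divisibility $p\mid aP_n$ is automatic, and since $Q_n\equiv n^k\pmod p$ it suffices to require $p\nmid n$. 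If $p\mid(b^2-4ac)$ and $p\nmid a$, the polynomial $t^2+bt+ac$ has a double root $r\in\dbF_p$, and $\dbZ[\bet]/(p)\cong\dbF_p[\eps]/(\eps^2)$ with $\bet\equiv r+\eps$; then $\gam_n\equiv(n+r)+\eps$ is a unit (so $\gam_n^k$ is a unit and $p\nmid\gcd(P_n,Q_n)$) precisely when $n\not\equiv -r\pmod p$. Each of the finitely many bad primes thus excludes a single residue class for $n$, so the Chinese Remainder Theorem produces infinitely many admissible $n$, and we may further avoid the finitely many integer roots of $P_n$. Any such $n$, taken with $m=1$, completes the proof.

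The main obstacle is keeping straight the three cases in the structure of $\dbZ[\bet]/(p)$ and confirming that only ramified primes (together with primes dividing $a$) contribute nontrivial congruence conditions on $n$; once the bad set is shown to be finite, the Chinese Remainder Theorem delivers a suitable $n$ with no further effort.
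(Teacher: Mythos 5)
Your proof is correct, but it takes a genuinely different route from the paper's. The paper chooses a rational prime $p\nmid a$ that splits in $K=\dbQ(\alp)$, takes $ma\alp+n$ to be a generator of $\grp_1^{h(K)}$, and deduces $(A,B)=1$ from the fact that the norm of $A\alp+B$ is a power of $p$ while the ideal $(A\alp+B)=\grp_1^{kh(K)}$ cannot be divisible by $(p)=\grp_1\grp_2$; the nonvanishing of $A$ likewise comes from the ideal $\grp_1^{kh(K)}$ not being Galois-stable. You instead fix $m=1$, work explicitly in the order $\dbZ[a\alp]$, and control $\gcd(aP_n,Q_n)$ prime by prime via the structure of $\dbZ[\beta]/(p)$, which is a clean and complete argument: the reduced case correctly shows unramified primes prime to $a$ impose no condition, the identification $Q_n\equiv n^k\pmod{ac}$ handles $p\mid a$, the dual-numbers picture handles ramified primes, and each bad prime excludes only one residue class so CRT applies; the leading-coefficient computation $P_n=kn^{k-1}+\cdots$ correctly disposes of $A\ne 0$. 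Your version is more elementary and effective — it avoids the class number, the existence of split primes, and the slightly delicate issue of finding an ideal generator inside the possibly non-maximal order $\dbZ[a\alp]$ — and it yields infinitely many admissible $n$ in an explicit arithmetic progression. It also retains the two by-products of the paper's proof that are used later in the proof of Theorem \ref{theorem1.1}, namely $a\mid A$ (since $A=aP_n$) and the fact that $\text{\rm Norm}_{K/\dbQ}(A\alp+B)$ is a perfect $k$-th power of an integer, so it could be substituted without disturbing the downstream argument.
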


\begin{proof} There exists some rational prime $p$ not dividing $a$ which splits in 
$K=\dbQ(\alp)$, so $(p)=\grp_1\grp_2$ with $\grp_1$ and $\grp_2$ contained in the order 
$\dbZ[a\alp]$. Denoting the class number of $K$ by $h(K)$, one has that $\grp_1^{h(K)}$ is 
principal and hence generated by $ma\alp +n$ for some $m,n\in \dbZ$ with $m\ne 0$. Since 
$ma\alp+n$ is an algebraic integer of $K$, it follows that for any $k\in \dbN$, one has 
$(ma\alp+n)^k=A\alp+B$ for some $A,B\in \dbZ$ with $A\ne 0$ and $a|A$. It remains now 
only to confirm that $(A,B)=1$. But since $ma\alp+n$ generates $\grp_1^{h(K)}$ and
$$\text{\rm Norm}_{K/\dbQ}((ma\alp+n)^k)=\text{\rm Norm}_{K/\dbQ}(A\alp+B)=
a^{-1}(aB^2-bAB+cA^2),$$
we find that $(A/a)Ac-(A/a)Bb+B^2$ is a power of $p$. Any prime which divides both $A$ 
and $B$ must divide this norm, and thus must be equal to $p$. However, one cannot have 
both $p|A$ and $p|B$, for then the ideal $(A\alp+B)=(ma\alp+n)^k$ would be divisible by 
the ideal $(p)=\grp_1\grp_2$, contradicting our assumption that $\grp_1^{h(K)}$ is 
generated by $ma\alp+n$. Thus we conclude that $(A,B)=1$.
\end{proof}

\begin{proof}[The proof of Theorem \ref{theorem1.1}] Let $f(t)=at^2+bt+c\in \dbZ[t]$ be 
irreducible, and let $\alp$ and $\alp'$ be the roots of $f$ in its splitting field. We 
have in mind the application of Lemma \ref{lemma4.1} to seek a relation of the shape 
$\alp=(\bet^k-B)/A$ that we hope to apply in a manner not dissimilar to Theorem 
\ref{theorem1.3}. First we describe the powers $k$ in play. We take $X$ to be large, and 
choose $k$ to be the product of all the primes less than $X$ not dividing $2a\phi(a)$. Since 
we are omitting only a finite set of primes, it follows that
\begin{equation}\label{4.1}
\prod_{p|k}(1-1/p)\asymp 1/\log X\asymp 1/\log \log k.
\end{equation}

\par By applying Lemma \ref{lemma4.1}, one finds that there exist integers $m$, $n$, 
$A$, $B$ with $A\ne 0$ and $(A,B)=1$ for which $(ma\alp+n)^k=A\alp+B$. We put 
$\bet=A\alp+B$, and note that $f((\bet-B)/A)=f(\alp)=0$. Denote by $\Ome_d$ the set of 
primitive $d$-th roots of unity. Put $G(t)=(t^k-B)/A$, and let $\zet\in \Ome_d$ for 
some $d|k$. Then we see that
$$f(G((ma\alp+n)\zet ))=f(((ma\alp+n)^k-B)/A)=f(\alp)=0$$
and
$$f(G((ma\alp'+n)\zet ))=f(((ma\alp'+n)^k-B)/A)=f(\alp')=0.$$
Note here that when $\zet$ and $\zet'$ are distinct $k$-th roots of unity, then 
$$(ma\alp+n)\zet\ne (ma\alp+n)\zet'\quad \text{and}\quad (ma\alp+n)\zet \ne 
(ma\alp'+n)\zet'.$$
The first relation is self-evident, whilst the second follows by taking $k$-th powers and 
observing that $A\alp+B\ne A\alp'+B$. It therefore follows that all of the roots of $f(G(t))$ 
are accounted for by $(ma\alp+n)\zet$ and $(ma\alp'+n)\zet$ with $\zet\in \Ome_d$ for 
some $d|k$. Thus, one may write $f(G(t))=C\prod_{d|k}h_d(t)$ for a suitable rational 
number $C$, where
$$h_d(t)=\prod_{\zet\in \Ome_d}\left(t-(ma\alp+n)\zet \right) \left(t-(ma\alp'+n)\zet 
\right).$$
Note here that, by considering conjugation in the field extension $\dbQ(\alp,\zet)/\dbQ$, 
for $\zet\in \Ome_d$, it is apparent that $h_d\in \dbQ[t]$ whenever $d|k$. Moreover, the 
polynomial $h_d$ has degree $2\phi(d)$.\par

The possibility remains of an obstruction to selecting a polynomial $g$ having 
{\it integral} coefficients for which $f(g)$ is well-factorable. In order to address this 
complication, we consider the polynomial $g(t)=G(At+z)$, and seek to select $z$ in such a 
manner that $g\in \dbZ[t]$. Put $K=\dbQ(\alp)$ and consider the norm of the algebraic 
integer $A\alp+B$, namely
$$\text{\rm Norm}_{K/\dbQ}(A\alp+B)=a^{-1}(aB^2-bAB+cA^2)=\left( 
\text{Norm}_{K/\dbQ}(ma\alp+n)\right)^k.$$
By construction, we have $a|A$, and thus we see that $B^2$ is a $k$-th power modulo 
$A/a$. Since $k$ is odd, this observation implies that $B$ is also a $k$-th power modulo 
$d$ for every divisor $d$ of $A/a$. Let $a^\prime$ be the divisor of $A$ given by 
$a'=\lim_{N\rightarrow \infty}(A,a^N)$. Then, in particular, we find that $B$ is a $k$-th 
power modulo $A/a'$. But $k$ is coprime to both $a$ and $\phi(a)$, and hence to the 
order of $(\dbZ/a'\dbZ)^\times$, and thus all integers coprime to $a'$ are necessarily 
$k$-th powers modulo $a'$. We may therefore conclude that $B$ is a $k$-th power modulo 
$A/a'$ and modulo $a'$. Since $A/a'$ and $a'$ are coprime, we discern that $B$ is a $k$-th 
power modulo $A$, say $B\equiv z^k\mmod{A}$.\par

We may now put
$$g(t)=G(At+z)=((At+z)^k-B)/A\in \dbZ[t],$$
and we deduce that $f(g(t))=C\prod_{d|k}h_d(At+z)$. Thus, on recalling (\ref{4.1}), we 
infer that $f(g(t))$ factors as a product of polynomials of degree at most
$$\max_{d|k}\text{\rm deg}(h_d)=\max_{d|k}2\phi(d)=2k\prod_{p|k}(1-1/p)\asymp 
2k/\log \log k.$$
By Gauss' Lemma, moreover, there is no loss in supposing that these polynomial factors lie in 
$\dbZ[t]$. In particular, the polynomial $f$ exhibits polysmoothness $\eps$ for any $\eps>0$. 
By construction, moreover, the polynomial $g$ has odd degree $k$, and so the proof of 
Theorem \ref{theorem1.1} is complete.
\end{proof}

Unfortunately, the construction applied here in the proof of Theorem \ref{theorem1.1} is 
less successful for higher degree polynomials. When $f=at^3+bt^2+ct+d\in \dbZ[t]$ is cubic, 
for example, and $\alp$ is a root of $f$ in its splitting field, then one cannot expect 
that there is an integer $k>1$ for which
$$(ma\alp+n)^k=A\alp+B,$$
for appropriate integers $m$, $n$, $A$ and $B$ with $A\ne 0$. Instead, one can find integers 
$A$, $B$ and $C$ for which
$$(ma\alp+n)^k=A\alp^2+B\alp+C.$$
A plausible plan is then to obtain a relation of the type
$$\lam (ma\alp+n)^{2k}+\mu (ma\alp+n)^k=A\alp+B,$$
for suitable integers $A$, $B$, $\lam$, $\mu$. At best, such an approach would deliver a 
polynomial $g$ of the shape
$$g(t)=(\lam (At+z)^{2k}+\mu (At+z)^k-B)/A\in \dbZ[t]$$
having the property that $f(g(t))$ factors as a product of the shape 
$$Ch_0(At+z)\prod_{d|k}h_d(At+z),$$
wherein $h_0$ has degree $3k$. A priori, this might ensure polysmoothness $\tfrac{1}{2}$ 
at best, and so is not inherently stronger than the approach of Schinzel.

\section{Relatives of Aurifeuillian factorisations} We next describe the proof of Theorem 
\ref{theorem1.5}. This will not detain us for long. Suppose in the first instance that 
$f(t)=t^k+at^{k-1}-b$, with $a,b\in \dbZ$ and $b\ne 0$. We rearrange $f$ in order to 
engineer a cyclotomic construction, writing $f(t)=(t+a)t^{k-1}-b$. Thus, if we set 
$g(t)=b^kt^{k-1}-a$, we find that
$$f(g(t))=b\left( \left(btg(t)\right)^{k-1}-1\right)=b\prod_{d|(k-1)}\Phi_d(btg(t)).$$
The polynomial $f(g)$ has degree $K=k(k-1)$, whilst each irreducible factor of $f(g)$ has 
degree at most
$$\max_{d|(k-1)}\phi(d)k\le K\phi(k-1)/(k-1).$$
Then $f$ admits polysmoothness $\phi(k-1)/(k-1)$. This establishes part (i) of Theorem 
\ref{theorem1.5}.\par

Suppose next that $f(t)=at^k-t+b$, with $a,b\in \dbZ$ and $ab\ne 0$. With the same plan in 
mind as above, we set $g(t)=a^{k+1}t^k+b$, and arrive at the relation
$$f(g(t))=a\left( g(t)^k-(at)^k\right)=a\prod_{d|k}(at)^{\phi(d)}\Phi_d(g(t)/(at)).$$
The term in the product here indexed by $d$ is a polynomial of degree $k\phi(d)$. Thus, 
the polynomial $f(g)$ has degree $K=k^2$, whilst each irreducible factor of $f(g)$ has 
degree at most $\max_{d|k}\phi(d)k\le K\phi(k)/k$. Then $f$ admits polysmoothness 
$\phi(k)/k$. This establishes part (ii) of Theorem \ref{theorem1.5}, and completes the proof 
of the theorem.\vskip.2cm

We remark that Harrington \cite[Theorem 1]{Har2012} has investigated the irreducibility of 
polynomials $f(t)$ of the shape $t^n\pm ct^{n-1}\pm d$ over $\dbZ[t]$. Thus, such 
polynomials are irreducible when $n,c,d\in \dbN$ satisfy 
$$n\ge 3,\quad d\ne c,\quad d\le 2(c-1),\quad (n,c)\ne (3,3)\quad \text{and}\quad 
f(\pm 1)\ne 0.$$
Moreover, Ljunggren \cite[Theorem 3]{Lju1960} has shown that all of the polynomials
$$t^{3n}\pm t\pm 1,\quad t^{3n+1}\pm t\pm 1,\quad t^{6n+5}-t\pm 1\quad \text{and}
\quad t^{6n+2}\pm t-1$$
are irreducible for all natural numbers $n$.

\section{Polynomials resisting polysmoothness}
We finish with an account of some examples demonstrating limitations to the most ambitious 
results one might imagine concerning polysmoothness. We concentrate on irreducible 
polynomials $f_d\in \dbZ[t]$ of degree $d$. In view of the conclusion of Theorem 
\ref{theorem1.1}, it makes sense to restrict attention to degrees $d$ exceeding $2$. One 
might optimistically hope that for each such polynomial, there should exist a quadratic 
polynomial $g\in \dbZ[t]$ having the property that $f_d(g(t))=h_1(t)h_2(t)$, for some 
polynomials $h_i\in \dbZ[t]$ irreducible of degree $d$. Note here that $f_d(g(t))$ cannot be 
divisible by a polynomial $h\in \dbZ[t]$ of degree smaller than $d$, for then a root $\bet$ 
of this polynomial in its splitting field would supply a root $g(\bet)$ of $f_d$ with 
$[\dbQ(g(\bet)):\dbQ]<d$, contradicting the irreducibility of $f_d$. Thus, if the polynomial 
$f_d(g(t))$ is reducible, then necessarily it factors in precisely the shape $h_1(t)h_2(t)$ 
asserted.\par

As we have already discussed in the introduction, the construction of Schinzel 
\cite[Lemma 10]{Sch1967} shows that in the cubic case $d=3$, quadratic polynomials 
$g\in \dbZ[x]$ can be found for which $f_3(g(x))=h_1(x)h_2(x)$, with $h_1$ and $h_2$ both 
cubic. The corresponding situation for quartic polynomials is rather less clear. Consider, for 
example, the irreducible quartic polynomial
$$f_4(t)=t^4+t^2+2t+3.$$
One may computationally confirm that for every non-trivial integral choice of coefficients 
$a,b,c\in \dbZ$ with absolute value at most $1000$, the polynomial $f_4(ax^2+bx+c)$ is 
irreducible, so that no decomposition of the form sought is available in this range. This does 
not rule out the possibility, of course, that there might be a quadratic with very large 
coefficients that does deliver the sought after polysmoothness. On the other hand, if instead 
one works over $\dbQ[t]$ instead of $\dbZ[t]$, then obstructions are possible only for 
biquadratic quartics. Indeed, one has
$$f_4\left( -\frac{x^2+x+3}{2}\right) =\frac{1}{16}(x^4+2x^3+7x^2+2x+9)
(x^4+2x^3+7x^2+10x+13).$$
This example shows that the problem of finding {\it integral} polynomial substitutions 
delivering well-factorability is in general very much more challenging than finding 
corresponding rational polynomial substitutions.\par 

More generally, by completing the fourth power in the usual manner, it is apparent that 
decompositions similar to that of the last paragraph may be obtained for arbitrary quartic 
polynomials provided such is the case for irreducible quartics of the shape 
$f_4(t)=At^4+Bt^2+Ct+D$. If, in addition, one has  $C\ne 0$, then we may put
$$g(x)=-\frac{Ax^2+Bx+D}{C},$$
and we deduce that
$$f_4(g(x))=h_1(x)h_2(x),$$
for suitable quartic irreducible polynomials $h_1,h_2\in \dbQ[x]$. The point here is that, 
if $\alp$ is a root of $f$ in its splitting field, then $\alp=g(\alp^2)$, and so $\alp^2$ is a 
root of $f_4(g(x))$. Thus the minimal polynomial of $\alp^2$ over $\dbQ$ divides 
$f_4(g(x))$. A straightforward exercise confirms that this minimal polynomial is not quadratic, 
whence $[\dbQ(\alp^2):\dbQ]=[\dbQ(\alp):\dbQ]=4$ and the assertion that $h_1$ and 
$h_2$ are quartic follows.\par

For degrees $d$ exceeding $4$, obstructions to these quadratic-based decompositions 
appear, as can be seen from the following criterion. 

\begin{theorem}\label{theorem6.1} Let $f_d\in \dbZ[t]$ be an irreducible polynomial of 
degree $d$ with lead coefficient $A$, and define $\phi_d(x,y)=y^df_d(x/y)$. Suppose that 
there exists a quadratic polynomial $g\in \dbQ[t]$ having the property that $f_d(g(t))$ is 
reducible. Then the equation $Az^2=\phi_d(x,y)$ possesses a solution $(x,y,z)\in \dbQ^3$ 
with $yz\ne 0$.
\end{theorem}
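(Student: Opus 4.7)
The plan is to use Proposition \ref{lemma3.1} (Capelli) to transfer the assumed reducibility of $f_d(g(t))$ over $\dbQ[t]$ into a factorization of $g(t)-\alp$ over $K[t]$, where $K = \dbQ(\alp)$ for $\alp$ a root of $f_d$ in its splitting field, and then to take norms so as to extract the required rational point on the variety $Az^2 = \phi_d(x,y)$.

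Write $g(t) = at^2 + bt + c$ with $a, b, c \in \dbQ$ and $a \ne 0$. Applying Proposition \ref{lemma3.1} to the monic irreducible polynomial $A^{-1}f_d \in \dbQ[t]$, the hypothesis that $f_d(g(t))$ is reducible over $\dbQ[t]$ forces $g(t) - \alp$ to factor nontrivially in $K[t]$. Since $g(t) - \alp$ is quadratic in $t$ with leading coefficient $a \in \dbQ^\times$, this is equivalent to the assertion that its discriminant $\Del = (b^2 - 4ac) + 4a\alp$ is a square in $K$; write $\Del = \gam^2$ with $\gam \in K$.

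Next, set $x := b^2 - 4ac$ and $y := -4a$, both in $\dbQ$, so that $\gam^2 = x - y\alp$. Denoting by $\alp = \alp_1, \dots, \alp_d$ the roots of $f_d$ in its splitting field, one has
\[ \phi_d(x,y) = y^d f_d(x/y) = A \prod_{i=1}^d (x - y\alp_i) = A \cdot \text{\rm Norm}_{K/\dbQ}(x - y\alp). \]
Taking the $K/\dbQ$-norm of both sides of $\gam^2 = x - y\alp$, and setting $z := \text{\rm Norm}_{K/\dbQ}(\gam) \in \dbQ$, I obtain
\[ A z^2 = A \cdot \text{\rm Norm}_{K/\dbQ}(\gam)^2 = A \cdot \text{\rm Norm}_{K/\dbQ}(x - y\alp) = \phi_d(x, y), \]
which is the identity sought.

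Finally, I need to check that $yz \ne 0$. Since $g$ is genuinely quadratic, $y = -4a \ne 0$. If $z = 0$, then $\gam = 0$, forcing $x = y\alp$ and hence $\alp = x/y \in \dbQ$; this contradicts the irreducibility of $f_d$ (whose degree satisfies $d \ge 3$). The only conceptually nontrivial ingredient is the appeal to Capelli's proposition, which is already available as Proposition \ref{lemma3.1}; everything else is a routine norm computation, and I foresee no substantive obstacle.
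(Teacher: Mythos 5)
Your proof is correct and follows essentially the same route as the paper: both arguments reduce to the observation that $(b^2-4ac)+4a\alp$ is a square $\gam^2$ in $K=\dbQ(\alp)$ and then take norms, identifying $\text{\rm Norm}_{K/\dbQ}(x-y\alp)$ with $A^{-1}\phi_d(x,y)$. The only cosmetic difference is that you invoke Proposition \ref{lemma3.1} to see that $g(t)-\alp$ splits over $K$, whereas the paper argues directly from the degrees of the factors $h_1,h_2$ that a root $\bet$ of $g(t)-\alp$ lies in $K$ and takes $\gam=2a\bet+b$; you also make explicit the (easy) verification that $z\ne 0$, which the paper leaves implicit.
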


\begin{proof} By a now familiar argument, it is apparent that if $f_d(g(t))=h_1(t)h_2(t)$ is a 
factorisation of $f_d(g(t))$ with $h_i\in \dbQ[t]$ and $\text{\rm deg}(h_i)\ge 1$ $(i=1,2)$, 
then one must have $\text{deg}(h_i)\ge d$. It follows, in particular, that $h_1$ and $h_2$ 
are both constant multiples of irreducible polynomials of degree $d$. Let $\alp$ be a root of 
$f_d$ in its splitting field, and let $\bet$ be a root of the polynomial $g(t)-\alp$ in its splitting 
field. Then since $f(g(\bet))=0$, one must have $h_i(\bet)=0$ for either $i=1$ or $i=2$. 
Also, one has $\dbQ(\alp)=\dbQ(g(\bet))\subseteq \dbQ(\bet)$, and yet 
$[\dbQ(\bet):\dbQ]=\text{\rm deg}(h_i)=d=[\dbQ(\alp):\dbQ]$, so that 
$\dbQ(\bet)=\dbQ(\alp)$.\par

We may write $g(t)=at^2+bt+c$ for some $a,b,c\in \dbQ$ with $a\ne 0$. Thus, we have
$$(2a\bet+b)^2=4ag(\bet)+b^2-4ac=b^2-4ac+4a\alp.$$
Writing $K=\dbQ(\alp)$, and putting $m=4a$ and $n=b^2-4ac$, we obtain the relation
$$\left(\text{\rm Norm}_{K/\dbQ}(2a\bet+b)\right)^2=\text{\rm Norm}_{K/\dbQ}
(m\alp+n)=A^{-1}\phi_d(n,-m).$$
Thus, on recalling that $\bet\in \dbQ(\alp)$, we find that the equation $Az^2=\phi_d(x,y)$ 
has the rational solution
$$z=\text{\rm Norm}_{K/\dbQ}(2a\bet+b), \quad x=n, \quad y=-m\ne 0.$$
This completes the proof of the theorem.
\end{proof}

Note that since $A=\phi_d(1,0)$, the equation $Az^2=\phi_d(x,y)$ has the trivial solution 
$(x,y,z)=(1,0,1)$, and hence is automatically locally soluble everywhere. Of importance for 
the discussion of this section is the connection with hyperelliptic curves. When $d$ is even, 
say $d=2k$, any solution $(x,y,z)\in \dbQ^3$ of this equation with $yz\ne 0$ gives a rational 
point on the hyperelliptic curve defined by the equation $AY^2=\phi_{2k}(X,1)$, namely
$$(X,Y)=\left( \frac{x}{y},\frac{z}{y^k}\right).$$
However, as has been shown by Bhargava (see \cite{Bha2013}, and also \cite{BGW2017} 
for subsequent developments), most hyperelliptic curves over $\dbQ$ have no rational 
points. Thus, we must expect that for most irreducible polynomials $f_d\in \dbQ[x]$ of even 
degree $d\ge 6$, the composition $f_d(g(x))$ should be irreducible for all quadratic 
polynomials $g\in \dbQ[x]$. Specific examples can be obtained with some computational 
effort. For example, one may check that the polynomials
$$F_1(x)=x^6-x^4-21x^2-31\quad \text{and}\quad F_2(x)=x^6+x^4-18x^2-43$$
are irreducible over $\dbQ[x]$. We verified this assertion ourselves by applying the PARI/GP 
software package. Next, by reference to the tables of elliptic curves provided by the 
$L$-functions and Modular Forms Database (available at www.lmfdb.org), one finds that 
the elliptic curves with Weierstrass forms
\begin{equation}\label{6.1}
y^2=x^3-x^2-21x-31\quad \text{and}\quad y^2=x^3+x^2-18x-43,
\end{equation}
with respective Cremona labels 76a1 and 92a2, both have rank $0$ and trivial torsion. These 
elliptic curves consequently have only the single rational point at infinity. In particular, it 
follows that there is no rational solution to either of the equations obtained by substituting 
$(x,y)=(X^2,Y)$ into the equations (\ref{6.1}), namely
$$Y^2=X^6-X^4-21X^2-31\quad \text{and}\quad Y^2=X^6+X^4-18X^2-43.$$
Thus, the above discussion shows that for $i=1$ and $2$, the polynomial $F_i(g(x))$ is 
irreducible for all quadratic polynomials $g\in \dbQ[x]$. One might complain that these two 
examples are rather special, since the Galois group associated with these polynomials is not 
the full symmetric group $S_6$. We are grateful to Michael Stoll for supplying the additional 
example
$$F_3(X)=X^6-3X^5-4X^4+X^3-2X^2-2.$$
This polynomial is ``generic'', in the sense that it is irreducible with Galois group $S_6$, and 
moreover the equation $Y^2=F_3(X)$ has no rational solutions. Thus we may conclude as 
above that for all quadratic polynomials $g\in \dbQ[x]$, the polynomial $F_3(g(x))$ is 
irreducible.

\bibliographystyle{amsbracket}
\providecommand{\bysame}{\leavevmode\hbox to3em{\hrulefill}\thinspace}

\end{document}